\documentclass[12pt]{amsart}
\usepackage{amsmath,amssymb,amsbsy,amsfonts,amsthm,latexsym,
            amsopn,amstext,amsxtra,euscript,amscd,amsthm,graphicx,comment}
\usepackage[left=3.5cm,right=3.5cm,top=2cm,bottom=2cm,includeheadfoot]{geometry}
\usepackage[enableskew]{youngtab}

\newtheorem{lem}{Lemma}
\newtheorem{lemma}[lem]{Lemma}

\newtheorem{thm}{Theorem}
\newtheorem{theorem}[thm]{Theorem}

\newtheorem{cor}{Corollary}
\newtheorem{corollary}[cor]{Corollary}

\newtheorem*{conjecture}{Conjecture}

\theoremstyle{definition}

\newtheorem*{Acknowledgements}{Acknowledgements}

\theoremstyle{remark}


\def\\{\cr}
\def\({\left(}
\def\){\right)}
\def\<{\langle}
\def\>{\rangle}

\def\func#1{\mathop{\rm #1}}%

\begin{document}
\title{On the Non-vanishing of the D'Arcais Polynomials}
\author{Bernhard Heim}
\address{Department of Mathematics and Computer Science\\Division of Mathematics\\University of Cologne\\ Weyertal 86--90 \\ 50931 Cologne \\Germany}
\email{bheim@uni-koeln.de}
\author{Markus Neuhauser}
\address{Kutaisi International University, 5/7, Youth Avenue,  Kutaisi, 4600 Georgia}
\email{markus.neuhauser@kiu.edu.ge}
\address{Lehrstuhl f\"{u}r Geometrie und Analysis, RWTH Aachen University, 52056 Aachen, Germany}
\subjclass[2020] {Primary 05A17, 11P82; Secondary 05A20, 11R04}
\keywords{Algebraic number theory, Dedekind eta function, Generating functions, Recurrence relations}


\date{\today}
\begin{abstract}
In this paper we invest in the non-vanishing of the Fourier coefficients of powers of the Dedekind eta function. This is reflected in non-vanishing 
properties of the D'Arcais polynomials. We generalize and improve results of
Heim--Luca--Neuhauser and \.{Z}mija. We apply methods from algebraic number theory.
\end{abstract}
\maketitle
\section{Introduction}
In this paper, we study the vanishing properties of the $q$-expansion of $r$th powers of the
Dedekind $\eta$-function. Euler and Jacobi \cite{On03,Koe11} studied the odd cases $r=1$ and $r=3$ via
explicit formulas involving pentagonal and triangular numbers.
Serre \cite{Se85} considered the even case and proved that the sequence of coefficients is
lacunary if and only if $r \in S_{\func{even}}:=\{0,2,4,6,8,10, 14,26\}$. Lehmer \cite{Leh47} conjectured that the coefficients for $r=24$, which involves the discriminant function, called Ramanujan tau-function, never vanish. We also refer to Ono's speculation \cite{On95}
for $r=12$.
In general not much is known for integer powers.
In this paper we allow complex powers and consider all of them simultaneously, which leads to the study of D'Arcais polynomials $P_n^{\sigma}(x)$ \cite{HLN19, HNT20}.
Polynomization became recently a very active research field \cite{Li23}. 

The D'Arcais polynomials 
$P_n^{\sigma}(x)$ \cite{DA13, Co74, HN20} dictate the properties of
the coefficients of the powers of the Dedekind $\eta$-function \cite{Ne55,Se85,HNT20}.
The Dedekind $\eta$-function \cite{On03} is a modular form of weight $1/2$ and defined 
on the complex upper half-plane $\mathbb{H}$. Let $\tau \in \mathbb{H}$ and $q:= e^{2 \pi \,i \,\tau}$. Then
$$\eta(\tau):= q^{\frac{1}{24}} \, \prod_{n=1}^{\infty} \left( 1 - q^n \right).$$

One of the crucial facts is that the $n$th coefficients of the $q$-expansion of
$z$th powers of the infinite product  $\prod_{n=1}^{\infty} \left( 1 -q^n \right)$ are polynomial in $z$ of degree $n$. These are the D'Arcais polynomials. In combinatorics they are called
Nekrasov--Okounkov polynomials \cite{NO06, We06, Ha10}. 

Let $\sigma(n):= \sum_{d \mid
n} d$ and $z \in \mathbb{C}$. Then
\begin{equation*}\label{Einstieg}
\sum_{n=0}^{\infty} P_n^{\sigma}(z) \, q^n := \prod_{n=1}^{\infty} 
\left( 1 - q^n\right)^{-z} = 
\exp \left( z \sum_{n=1}^{\infty} \sigma(n) \, \frac{q^n}{n}\right).
\end{equation*}
Nekrasov and Okounkov \cite{NO06, We06,Ha10} discovered a new type of hook length formula
derived from
random partitions and the Seiberg--Witten theory.

Let $\lambda$ be a partition of $n$ denoted by $\lambda \vdash n$ with weight $|\lambda|=n$
and 
$\mathcal{H}(\lambda)$ the multiset of hook lengths associated
with $\lambda$. Let $\mathcal{P}$ be
the set of all partitions.
Then the Nekrasov--Okounkov hook length formula is given by
\begin{equation} \label{ON}
\sum_{ \lambda \in \mathcal{P}} q^{|\lambda|} \prod_{ h \in \mathcal{H}(\lambda)} \left(  1 - \frac{z}{h^2} \right) =   \prod_{m=1}^{\infty} \left( 1 - q^m \right)^{z-1}.
\end{equation}
Note that $A_n^{\sigma}(x):= n! \, P_n^{\sigma}(x) \in \mathbb{N}_0[x]$ are monic.
Therefore, the zeros are algebraic integers. For example,
this implies that the coefficients of
$\prod_{n=1}^{\infty} \left( 1 - q^n \right)^{\frac{1}{2}}$ 
are rational and non-vanishing.

By utilizing results from representation theory of simple complex Lie algebras, Kostant \cite{Ko04} proved that $P_n^{\sigma}\left(-(m^2-1)\right)$ does never vanish for $m \geq n$. Han deduced from (\ref{ON}) that for real $x$, we have already $P_n^{\sigma}(x) \neq 0$
if $\vert x \vert \geq n^2-1$.
Recently, these results have been significantly improved \cite{HN23}.
Let $c:= 9.7226$ and $x \in \mathbb{C}$. Then $P_n^{\sigma}(x)\neq 0$
for all $\vert x \vert > c \, (n-1)$. It is known that $c$ can not be smaller than $9.72245$. There is a conjecture \cite{HN19} on the real part of the zeros of $P_n^{\sigma}(x)$. Recall that
a polynomial is called Hurwitz polynomial or stable polynomial 
if the real part of its zeros is negative. 

\begin{conjecture}[Heim, Neuhauser \cite{HN19}]
Let $n \geq 1$. The D'Arcais polynomials $P_n^{\sigma}(x)$ divided by $x$ are Hurwitz polynomials and the zeros are simple.
\end{conjecture}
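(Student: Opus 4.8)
The plan is to reduce the conjecture to a half-plane non-vanishing statement and then attack it via the Hermite--Biehler criterion together with the product structure of the generating series. Since $A_n^{\sigma}=n!\,P_n^{\sigma}\in\mathbb{N}_0[x]$ is monic, the recurrence $n\,P_n^{\sigma}(z)=z\sum_{k=1}^{n}\sigma(k)\,P_{n-k}^{\sigma}(z)$, read off from the logarithmic derivative of $\prod_{m}(1-q^m)^{-z}$, shows $P_n^{\sigma}(0)=0$ and that $Q_n(x):=P_n^{\sigma}(x)/x$ has strictly positive coefficients with $Q_n(0)>0$. Thus the first necessary condition for stability holds automatically, and the conjecture becomes equivalent to: $P_n^{\sigma}$ has no zero with $\operatorname{Re}(x)\ge 0$ apart from the simple zero $x=0$, and all its remaining zeros are simple. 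First I would recast this by Hermite--Biehler: writing $Q_n(x)=h(x^2)+x\,g(x^2)$, stability with simple zeros is equivalent to $h$ and $g$ having only real, negative, simple, and strictly interlacing zeros. Establishing this interlacing is the heart of the matter.

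The structural input I would exploit is the factorization $\prod_{m\ge1}(1-q^m)^{-z}=\prod_{m\ge1}\sum_{j\ge0}\binom{z+j-1}{j}q^{mj}$, which yields
\[
P_n^{\sigma}(z)=\sum_{\sum_m m j_m=n}\ \prod_{m\ge1}\binom{z+j_m-1}{j_m}.
\]
Each factor $\binom{z+j-1}{j}=\tfrac{1}{j!}z(z+1)\cdots(z+j-1)$ has only the real non-positive zeros $0,-1,\dots,-(j-1)$, so every summand is a polynomial all of whose zeros are real and $\le 0$, hence individually (weakly) Hurwitz. The difficulty is that stability is not preserved under addition in general. The plan is to invoke the Hermite--Kakeya--Obreschkoff circle of ideas: a nonnegative combination of real-rooted (resp.\ stable) polynomials is again real-rooted (resp.\ stable) precisely when the summands are in proper position, i.e.\ admit a common interlacer. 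I would therefore try to produce an explicit common interlacer for the family $\{\prod_{m}\binom{z+j_m-1}{j_m}\}_{\sum_m m j_m=n}$, such as a suitable product $z(z+1)\cdots$, from which stability of the sum---and, via the strict positivity of the coefficients, simplicity of its zeros---would follow.

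A complementary and perhaps more robust route is induction on $n$ through the recurrence itself. Assuming $P_1^{\sigma},\dots,P_{n-1}^{\sigma}$ are stable, one wants to conclude that $z\sum_{k=1}^{n}\sigma(k)\,P_{n-k}^{\sigma}(z)$ is stable; multiplication by $z$ only adds a zero at the origin, so the task reduces to the stability of the weighted sum $\sum_{k}\sigma(k)P_{n-k}^{\sigma}$. Here I expect the main obstacle to lie: the recurrence is a full convolution coupling $P_n^{\sigma}$ to all lower polynomials with the growing arithmetic weights $\sigma(k)$, so a two-term interlacing invariant between consecutive indices is not obviously propagated, and the sum of stable polynomials need not be stable. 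The crux is thus to show that $P_0^{\sigma},\dots,P_{n-1}^{\sigma}$ share a common interlacer, an invariant that would have to be carried through the induction simultaneously with stability. I would strengthen the inductive hypothesis to a precise interlacing chain among the $P_j^{\sigma}$, or among the even and odd parts appearing in the Hermite--Biehler decomposition, and attempt to verify that convolution against the weights $\sigma(k)$ preserves this chain; proving that preservation against the irregular growth of $\sigma$ is where I anticipate the real work, and the simple-zeros assertion should then drop out from the strictness of the interlacing on the imaginary axis.
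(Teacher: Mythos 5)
The statement you are asked to prove is labeled in the paper as a \emph{conjecture} (Heim--Neuhauser); the paper contains no proof of it and, to the best of current knowledge, it is open. The authors only establish partial evidence: non-vanishing of $P_n^{\sigma}$ at primitive $m$th roots of unity for $m\ge 3$ and at certain translates $\zeta_m+2\beta$, $\zeta_m+3\beta$, via reduction of $A_n^{g}(x)$ modulo $2$ and $3$ and the Dedekind--Kummer theorem. So there is no ``paper proof'' to compare against; the only question is whether your argument settles the conjecture, and it does not.

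Your text is a research program, not a proof, and the two places where the actual mathematical content would have to live are both left open. First, the Hermite--Kakeya--Obreschkoff/common-interlacer machinery you invoke governs \emph{real-rootedness} of nonnegative combinations of polynomials in proper position; but the $P_n^{\sigma}$ are not real-rooted for general $n$ (which is precisely why the conjecture asks for the Hurwitz property rather than real-rootedness), so what you need is a stability-preserving criterion for the sum $\sum_{\sum_m m j_m=n}\prod_m\binom{z+j_m-1}{j_m}$. A sum of Hurwitz polynomials with positive coefficients need not be Hurwitz, the summands here have varying degrees $\sum_m j_m$ as the partition varies (already an obstruction to the standard proper-position framework), and you exhibit no common interlacer nor any reason one should exist. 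Second, your inductive route through the recurrence founders exactly where you say it does: propagating an interlacing invariant through the full convolution $\sum_k\sigma(k)P_{n-k}^{\sigma}$ with the irregular weights $\sigma(k)$ is the whole difficulty, and you explicitly defer it (``where I anticipate the real work''). The only steps you actually establish --- positivity of the coefficients of $P_n^{\sigma}(x)/x$ and the real-rootedness of each individual summand $\binom{z+j-1}{j}$ --- are easy necessary conditions and known observations; they do not advance the conjecture. If you want to make partial progress in the spirit of the paper, the tractable route is the one the authors take: rule out zeros at specific algebraic integers (e.g.\ on the imaginary axis, $P_n^{\sigma}(\pm i)\neq 0$ and $P_n^{\sigma}(it)\neq 0$ for $t\equiv\pm 1\pmod 3$) by factoring $A_n^{\sigma}$ modulo small primes, rather than attempting the full half-plane statement.
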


In particular it would be interesting to know if there are no zeros on the imaginary axes. 
As a first result in this direction, the following is known.
\begin{theorem}[Heim, Luca, Neuhauser \cite{HLN19}] 
\label{luca}
Let $m \geq 3$ and let $\zeta_m$ be a primitive root of unity. Then
for all $n \in \mathbb{N}$: 
\begin{equation*}
P_n^{\sigma}(\zeta_m)\neq 0.
\end{equation*}
In particular
$P_n^{\sigma}(\pm i ) \neq 0$.
\end{theorem}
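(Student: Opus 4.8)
The plan is to turn the analytic statement into a divisibility of integer polynomials and then to force a contradiction by reduction modulo a prime dividing $m$. First I would extract the standard recurrence from the logarithmic derivative of the generating function: writing $L(q):=\sum_{k\ge1}\sigma(k)q^k/k$, the identity $\sum_{n\ge0}P_n^{\sigma}(z)q^n=\exp\!\left(z\,L(q)\right)$ gives $F'=z\,L'\,F$, and comparing coefficients yields
\begin{equation*}
n\,P_n^{\sigma}(z)=z\sum_{k=1}^{n}\sigma(k)\,P_{n-k}^{\sigma}(z),\qquad P_0^{\sigma}=1.
\end{equation*}
Since $A_n^{\sigma}=n!\,P_n^{\sigma}$ is monic with coefficients in $\mathbb{N}_0$ and $P_n^{\sigma}(0)=0$ for $n\ge1$, the number $\zeta_m$ is a zero of $P_n^{\sigma}$ exactly when it is a zero of the monic integer polynomial $A_n^{\sigma}$. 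As the minimal polynomial of the primitive root $\zeta_m$ is the cyclotomic polynomial $\Phi_m$, Gauss's Lemma shows that $P_n^{\sigma}(\zeta_m)=0$ would force $\Phi_m\mid A_n^{\sigma}$ in $\mathbb{Z}[x]$. It therefore suffices to prove $\Phi_m\nmid A_n^{\sigma}$ for every $n$.

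Next I would fix a prime $p\mid m$, write $m=p^{a}m'$ with $p\nmid m'$, and reduce modulo $p$. Using the congruence $\Phi_m\equiv\Phi_{m'}^{\,\phi(p^{a})}\pmod p$, the hypothesis $\Phi_m\mid A_n^{\sigma}$ would force \emph{every} primitive $m'$-th root of unity $\bar\xi\in\overline{\mathbb{F}_p}$ to be a root of $\overline{A_n^{\sigma}}\in\mathbb{F}_p[x]$ of multiplicity at least $\phi(p^{a})$. This is already decisive for small $n$ and exhibits the mechanism in the prime-power case $m=p^{a}$, where $m'=1$ and the single forced root is $x=1$: for $m=4$ (so $\zeta_4=\pm i$) one has $\Phi_4\equiv(x-1)^2\pmod2$, and for $m$ prime $\Phi_m\equiv(x-1)^{m-1}\pmod m$, so $x=1$ is forced to be a root of multiplicity $\ge m-1$; since $A_n^{\sigma}$ is monic of degree $n$ this is impossible as soon as $n<m-1$, and in general it tightly constrains the local behaviour of $\overline{A_n^{\sigma}}$ at $x=1$.

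To reach all $n$ I would control this multiplicity through the Hasse derivatives $c_{n,i}:=[y^i]A_n^{\sigma}(1+y)\in\mathbb{Z}$, for which the multiplicity of the root $x=1$ in $\overline{A_n^{\sigma}}$ equals the least $i$ with $p\nmid c_{n,i}$. Differentiating $\prod_k(1-q^k)^{-x}$ in $x$ gives $\partial_x\prod_k(1-q^k)^{-x}=L(q)\prod_k(1-q^k)^{-x}$, whence
\begin{equation*}
\sum_{n\ge0}\frac{c_{n,i}}{n!}\,q^n=\frac{1}{i!}\,L(q)^{i}\prod_{k\ge1}(1-q^k)^{-1}.
\end{equation*}
The task then becomes to show that for every $n$ some $i<\phi(p^{a})$ yields $p\nmid c_{n,i}$, equivalently to bound below the $p$-adic valuation of these integers using the recurrence; a Newton-polygon argument is the natural tool, and for a general factor $m$ one runs the same computation in the expansion of $A_n^{\sigma}$ about the forced root $\bar\xi$ rather than about $1$.

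The main obstacle is precisely this last, uniform-in-$n$ step. For $n\ge p$ the factor $n!$ contributes a large and growing power of $p$, so the non-vanishing of $P_n^{\sigma}(\zeta_m)$ rests on a delicate cancellation between that power and the $p$-content produced by the denominators $1/k$ inside $L(q)^{i}$. Showing that the minimal-valuation contribution in the recurrence is never entirely cancelled — so that the multiplicity at $\bar\xi$ stays strictly below $\phi(p^{a})$ — is the crux; the passage from $P_n^{\sigma}(\zeta_m)=0$ to $\Phi_m\mid A_n^{\sigma}$, Gauss's Lemma, and the cyclotomic congruence are the comparatively routine scaffolding, with $m=4$ (the $\pm i$ assertion) serving as the model case in which the $2$-adic bookkeeping is simplest.
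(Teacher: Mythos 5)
Your reduction from $P_n^{\sigma}(\zeta_m)=0$ to $\Phi_m\mid A_n^{\sigma}$ in $\mathbb{Z}[x]$ is correct and matches the paper's starting point, but the prime you then choose is the wrong one, and the step you defer as ``the crux'' is not merely hard --- it is false. You fix $p\mid m$ and hope to show that the multiplicity of the forced root (e.g.\ $x=1$ when $m=p^{a}$) in $\overline{A_n^{\sigma}}\in\mathbb{F}_p[x]$ stays below $\varphi(p^{a})$. But the recurrence gives (Lemma 1 of the paper) $A_{\ell p+r}^{\sigma}\equiv A_r^{\sigma}\,(A_p^{\sigma})^{\ell}\pmod p$ with $A_p^{\sigma}\equiv x(x-1)\cdots(x-p+1)\pmod p$, so the multiplicity of $x=1$ in $\overline{A_n^{\sigma}}$ grows like $n/p$. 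Already in your model case $m=4$, $p=2$: one has $A_{2\ell}^{\sigma}\equiv x^{\ell}(x+1)^{\ell}\pmod 2$, so $(x+1)^{2}=\overline{\Phi_4}$ divides $\overline{A_n^{\sigma}}$ for every even $n\ge 4$, and reduction mod $2$ yields no contradiction whatsoever; the same happens for $m=9$ with $p=3$. Your Hasse-derivative/Newton-polygon refinement, as formulated (find $i<\varphi(p^{a})$ with $p\nmid c_{n,i}$), is exactly this multiplicity statement and therefore also fails.

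The mechanism that actually works is the opposite choice of prime: one wants a prime whose inertia degree in $\mathbb{Q}(\zeta_m)$ exceeds $1$, i.e.\ for which $\Phi_m\bmod p$ has an irreducible factor of degree at least $2$ --- typically a prime \emph{not} dividing the relevant part of $m$. The factorization $A_n^{\sigma}\equiv A_r^{\sigma}(A_p^{\sigma})^{\ell}\pmod p$ shows that $\overline{A_n^{\sigma}}$ splits into \emph{linear} factors over $\mathbb{F}_2$ for all $n$ (since $A_1^{\sigma}=x$ and $A_2^{\sigma}\equiv x(x+1)$), and likewise over $\mathbb{F}_3$ for all $n$ (since $\sigma(3)\equiv 1\pmod 3$ makes $A_3^{\sigma}\equiv x(x^2-1)$ split). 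Since the multiplicative order of $2$ modulo $m$ exceeds $1$ unless $m$ is a power of $2$, and the order of $3$ modulo $2^{\ell}$ exceeds $1$ for $\ell\ge 2$, one concludes: $\Phi_m\bmod 2$ has a nonlinear irreducible factor for $m\ne 2^{\ell}$, and $\Phi_{2^{\ell}}\bmod 3$ is a product of nonlinear irreducibles for $\ell\ge 2$; either way $\Phi_m\mid A_n^{\sigma}$ is impossible. That two-prime dichotomy ($p=2$ versus $p=3$, chosen by inertia degree rather than by ramification) is the content of the paper's proof and is precisely what your plan is missing.
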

Recently, \.{Z}mija \cite{Zm23} in his doctoral thesis
developed a remarkable generalization of Theorem \ref{luca}.
Let $g: \mathbb{N} \longrightarrow \mathbb{Z}$ with $g(1)=1$ be a normalized arithmetic function. 

Define
\begin{equation*}\label{general}
P_n^g(x) := \frac{x}{n} \sum_{k=1}^{n} g(k) \, P_{n-k}^g(x), \quad n \geq 1,
\end{equation*}
with initial value $P_0^g(x)=1$ (\cite{HNT20, Zm23}).

\begin{theorem}[\.{Z}mija \cite{Zm23}] 
\label{zmija}
Let $g$ be a normalized $\mathbb{Z}$-valued arithmetic function
and $P_n^g(x)$ integer-valued polynomials for all $n \in \mathbb{N}$.
Let $m \geq 3$ and $\zeta_m$ be a primitive root of unity. Let $A_n^g(x):= n! \, P_n^g(x)$. Assume
\begin{enumerate}
\item  modulo $5$: none of the polynomials $A_3^g(x)$ and $A_4^g(x)$ is divisible by a monic irreducible polynomial of degree $2$ over $\mathbb{F}_5$.
\item  modulo $7$: none of the polynomials $A_r^g(x)$ for $2 \leq r \leq 6$ is divisible by a monic irreducible polynomial of degree $4$ over $\mathbb{F}_7$.
\item
modulo $11$: none of the polynomials $A_r^g(x)$ for $2 \leq r \leq 10$ is divisible by a monic irreducible polynomial over $\mathbb{F}_{11}$ that divides
$x^{11^6 -1} -1$ and does not divide $x^{11^d-1}-1$ for $1 \leq d \leq 10$, $d \neq 6$.
\end{enumerate}
Then $P_n^g(\zeta_m) \neq 0$ for all $m$th roots of unity $\zeta_m$ of order at least $3$. In particular,
\begin{equation*} \label{imagi}
P_n^{g}(\pm i ) \neq 0.
\end{equation*}
\end{theorem}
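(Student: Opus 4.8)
The plan is to turn the vanishing $P_n^g(\zeta_m)=0$ into a divisibility of a monic integer polynomial by a cyclotomic polynomial, and then to obstruct that divisibility by reducing modulo a well-chosen prime. First I would record the structural facts that make this possible. Rewriting the defining recursion as $A_n^g(x)=x\sum_{k=1}^{n}g(k)\frac{(n-1)!}{(n-k)!}\,A_{n-k}^g(x)$ and inducting on $n$ (using $A_0^g=1$, $g(1)=1$, and that the falling factorials $\frac{(n-1)!}{(n-k)!}$ are integers) shows that each $A_n^g$ lies in $\mathbb{Z}[x]$, is monic of degree $n$, and is divisible by $x$. In particular $A_0^g=1$, $A_1^g=x$, and $A_2^g=x\bigl(x+g(2)\bigr)$. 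Now suppose $P_n^g(\zeta_m)=0$ for a primitive $m$-th root of unity with $m\ge 3$, equivalently $A_n^g(\zeta_m)=0$. Since $\zeta_m$ has minimal polynomial $\Phi_m$ over $\mathbb{Q}$ and $A_n^g$ is monic in $\mathbb{Z}[x]$, Gauss's lemma gives $\Phi_m\mid A_n^g$ in $\mathbb{Z}[x]$. Fixing any prime $p\nmid m$ and reducing, $\Phi_m\bmod p$ is a product of distinct irreducibles over $\mathbb{F}_p$, each of degree $d:=\operatorname{ord}_m(p)$, so the reduction $\overline{A}_n^g:=A_n^g\bmod p$ must possess an irreducible factor of degree exactly $d$.

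The crux is a reduction lemma limiting where such a factor can first appear: if an irreducible $f\in\mathbb{F}_p[x]$ of degree $d$ divides $\overline{A}_n^g$ for some $n\ge 1$, then $f$ already divides $\overline{A}_r^g$ for some $r\le p-1$. I would establish this by studying the recursion modulo $p$ evaluated at a root $\alpha$ of $f$ in $\mathbb{F}_{p^d}$: for $n\not\equiv 0\pmod p$ the prefactor $n$ is invertible and propagates the values $\overline{A}_n^g(\alpha)$, while at the multiples of $p$, where the recursion degenerates, the Frobenius $\alpha\mapsto\alpha^{p}$ (of order $d$ on $\mathbb{F}_{p}(\alpha)=\mathbb{F}_{p^d}$) folds the higher blocks back onto the initial block $r\le p-1$; the hypothesis that all $P_n^g$ are integer-valued is exactly what keeps the factorials in the denominators under control during this folding. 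Granting the lemma, the factor $x\mid\overline{A}_r^g$ sharpens the range: since $\overline{A}_r^g=x\cdot(\text{polynomial of degree }r-1)$, a degree-$d$ factor can occur only for $d+1\le r\le p-1$. Consequently, whenever $\operatorname{ord}_m(p)\ge p-1$ this range is empty and $\Phi_m\nmid A_n^g$ for every $n$.

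It remains to supply, for each $m\ge 3$, a prime that resolves it, and here I would take $p$ to be the smallest prime not dividing $m$ and check the orders directly. For odd $m$ take $p=2$ and for $m$ coprime to $3$ take $p=3$; in both cases $\operatorname{ord}_m(p)\ge p-1$, so these $m$ are settled for free (this already covers $m=4$, giving $P_n^g(\pm i)\ne 0$). If $6\mid m$ then $p=5$: either $\operatorname{ord}_m(5)\ge 4$ (free) or $\operatorname{ord}_m(5)=2$, which forces $m\in\{6,12,24\}$ and leaves only $r\in\{3,4\}$ to inspect. If $30\mid m$ then $p=7$: either $\operatorname{ord}_m(7)\ge 6$ (free) or $\operatorname{ord}_m(7)=4$, leaving $r\in\{5,6\}$. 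If $210\mid m$ then $p=11$: since $\operatorname{ord}_{210}(11)=6$ the order is a multiple of $6$, so either it is $\ge 12$ (free) or equals $6$, leaving $r\in\{7,8,9,10\}$; and for $m$ divisible by still larger primorials the relevant order already exceeds $p-1$, so no further prime contributes a genuine obstruction. In the three surviving situations a degree-$2$, degree-$4$, or degree-$6$ irreducible factor would have to divide $\overline{A}_r^g$ in the stated ranges modulo $5$, $7$, $11$ respectively, and this is precisely what hypotheses (1), (2), (3) forbid; the resulting contradiction yields $P_n^g(\zeta_m)\ne 0$.

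The step I expect to be the main obstacle is the reduction lemma for a \emph{general} arithmetic function $g$. In the classical case $g=\sigma$ the series $\prod_{k\ge1}(1-q^k)^{-x}$ has a product form and satisfies the Frobenius congruence $F(q^p)\equiv F(q)^p\pmod p$ outright, which confines the behaviour of $\overline{A}_n^g$ to a short initial block; for arbitrary $g$ the series $\exp\!\bigl(x\sum_{k}g(k)q^k/k\bigr)$ admits no such factorization, so the confinement to $r\le p-1$ must be extracted from the bare recursion together with the integer-valuedness hypothesis, and one must verify that the sharp bound $p-1$ (rather than some larger explicit bound) is genuinely correct. A smaller, purely number-theoretic point is the completeness of the covering: that every $m$ divisible by a primorial and not killed by a free prime does fall into one of the three tabulated conditions, which requires checking that orders such as $\operatorname{ord}_m(7)\in\{2,3,5\}$ or $\operatorname{ord}_m(11)\in\{2,3,4,5,7,8,9\}$ cannot occur under the relevant divisibility constraints.
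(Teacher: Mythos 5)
Your proposal correctly reconstructs the skeleton of \.{Z}mija's argument, which this paper does not reprove in detail but only describes (``a careful analysis of $A_n^g(x) \pmod{p}$ for all primes $p$ and an analytic argument based on properties of the Chebyshev function obtained by Rosser and Schoenfeld''). Your reduction lemma is exactly the paper's Lemma \ref{A:decomposition}: the congruence $A_{\ell p + r}^g(x) \equiv A_r^g(x)\,(A_p^g(x))^{\ell} \pmod p$ together with $A_p^g(x)\equiv x(x-1)\cdots(x-p+1)\pmod p$ (the latter is where integer-valuedness enters), so any irreducible factor of degree $\geq 2$ of $A_n^g \bmod p$ already divides $A_r^g \bmod p$ with $r = n \bmod p$. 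Your case analysis for $p=2,3,5,7,11$ (ruling out the orders $d=1,3$ mod $5$, $d=1,2,3,5$ mod $7$, and $d\neq 6$, $d\leq 9$ mod $11$ by divisibility of $p^d-1$) is correct and matches the role of hypotheses (1)--(3).

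The genuine gap is the final sentence of your covering argument: ``for $m$ divisible by still larger primorials the relevant order already exceeds $p-1$, so no further prime contributes a genuine obstruction.'' After $p=11$ the remaining case is $2310 \mid m$, and you must produce, for \emph{every} such $m$, a prime $p\nmid m$ with $\operatorname{ord}_m(p)\geq p-1$. This is not a routine check: the naive size argument fails, because $\operatorname{ord}_m(p)=d\leq p-2$ only yields $\prod_{q<p}q \leq m \leq p^{d}-1 < p^{p-2}$, and $p^{p-2}=e^{(p-2)\log p}$ vastly exceeds the primorial $e^{\theta(p)-\log p}\approx e^{p}$, so no contradiction follows. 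This is precisely the step for which \.{Z}mija and Heim--Luca--Neuhauser invoke the explicit Rosser--Schoenfeld bounds on the Chebyshev function (and machine computation); you have relegated it to a ``smaller, purely number-theoretic point,'' but it is the analytically substantive part of the proof and is missing. A secondary, lesser issue: your justification of the reduction lemma via ``Frobenius folding'' is only heuristic; the actual mechanism is that the falling factorials $(n-1)\cdots(n-k+1)$ vanish mod $p$ whenever the range straddles a multiple of $p$, which collapses the recursion block by block. Finally, note that the paper itself circumvents all of this by a different route (Theorem \ref{theorem:main}): using only $p=2$ and $p=3$, the inertial degrees in $\mathbb{Q}(\zeta_m)$, and the Dedekind--Kummer theorem -- at the price of the extra hypothesis $g(3)\not\equiv 2 \pmod 3$, which is not present in the statement you are proving.
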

The proofs of Theorem \ref{luca} and Theorem \ref{zmija}
are based on a careful analysis of $A_n^g(x) \pmod{p}$ for all prime numbers $p$ and an analytic argument based on properties of the Chebyshev function
obtained by Rosser and Schoenfeld \cite{RS75} and the utilization of 
the computer algebra system Mathematica.

In this paper we provide a new proof of the theorems and show
that the non-vanishing of $P_n^g(x)$ at values related to roots of unities in essence depend on the decomposition
$A_n^g(x) \pmod{2}$ and $A_n^g(x) \pmod{3}$ in $\mathbb{F}_2[x]$ and
$\mathbb{F}_3[x]$. Another ingredient of our proofs will be
the prime ideal decomposition of $p \mathcal{O}
_{K}$ in the cyclotomic field
$K=\mathbb{Q}(\zeta_m)$, where $\mathcal{O}_K$ is the ring of integers.
Further, we introduce a
certain algebraic integer $\alpha
$ with $K=\mathbb{Q}\left( \alpha
\right) $
and $p=2$ or $p=3$ does not divide the index $
\left[ \mathcal{O}_K: \mathbb{Z}
\left[ \alpha \right] \right] $.
Finally, we utilize the Dedekind--Kummer Theorem to obtain our results.

First we start with the improvement of Theorem \ref{luca} and Theorem \ref{zmija}. 

\begin{theorem}\label{theorem:main}
\label{heimneuhauser}Let $g$ be a normalized $\mathbb{Z}$-valued arithmetic function. Let $g(3)\equiv 0,1 \pmod{3}$.
Let $m \geq 3$ and $\zeta_m$ be an $m$th primitive root of unity. 
Then for all $n \in \mathbb{N}$: $P_n^{g}(\zeta_m)\neq 0$. 
In particular,
$P_n^{g}(\pm i ) \neq 0$.
\end{theorem}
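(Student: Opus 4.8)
The plan is to pass to reductions modulo the two smallest primes and to play them against the factorization of the cyclotomic polynomial $\Phi_m$. First I record two structural facts about $A_n^g$. Unwinding the defining recurrence gives $A_n^g(x)=x\sum_{k=1}^n g(k)\,\tfrac{(n-1)!}{(n-k)!}\,A_{n-k}^g(x)$, and since $\tfrac{(n-1)!}{(n-k)!}=(n-1)(n-2)\cdots(n-k+1)\in\Z$ and $g(1)=1$, an induction shows $A_n^g\in\Z[x]$ is monic of degree $n$. Consequently, if $P_n^g(\zeta_m)=0$ then $A_n^g(\zeta_m)=0$, so the minimal polynomial $\Phi_m$ of $\zeta_m$ divides $A_n^g$ in $\Q[x]$ and, $\Phi_m$ being monic, in $\Z[x]$ as well. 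Reducing modulo any prime $p$ yields $\overline{\Phi_m}\mid\overline{A_n^g}$ in $\mathbb{F}_p[x]$. The whole argument will consist in producing, for each $m\ge 3$, a prime $p\in\{2,3\}$ for which $\overline{\Phi_m}$ carries an irreducible factor of degree $\ge 2$ while $\overline{A_n^g}$ splits into linear factors over $\mathbb{F}_p$; this is the desired contradiction.

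The two splitting lemmas are the technical heart. The key simplification is that among $k-1$ consecutive integers there is always a multiple of $p$ once $k-1\ge p$, so $\tfrac{(n-1)!}{(n-k)!}\equiv 0\pmod p$ for $k\ge p+1$; hence modulo $2$ only the terms $k=1,2$ survive and modulo $3$ only $k=1,2,3$, so the reduced recurrences involve only $\overline{g(2)}$ and $\overline{g(3)}$ (and hold for every value of these). Modulo $2$ one gets $\overline{A_n^g}=x\,\overline{A_{n-1}^g}$ for $n$ odd and $\overline{A_n^g}=x\big(\overline{A_{n-1}^g}+\overline{g(2)}\,\overline{A_{n-2}^g}\big)$ for $n$ even, and a direct induction gives the closed form $\overline{A_n^g}\equiv x^{\rf{n/2}}\,(x+\overline{g(2)})^{\fl{n/2}}\pmod 2$, a product of linear factors. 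Modulo $3$ the recurrence splits into the three residue classes of $n$, and an analogous induction produces an explicit product of the factors $x$, $x+\overline{g(2)}$, and $x^2-\overline{g(3)}$; here the hypothesis $g(3)\equiv 0,1\pmod 3$ is used precisely to guarantee that $\overline{A_3^g}=x\,(x^2-\overline{g(3)})$ — and with it every $\overline{A_n^g}$ — splits into linear factors over $\mathbb{F}_3$ (for $g(3)\equiv 2$ the factor $x^2+1$ is irreducible and the argument breaks down, as it must). I expect this modulo-$3$ induction, with its case distinction on $n\bmod 3$ and the collapse $x^2+2=x^2-1$ in $\mathbb{F}_3[x]$, to be the main bookkeeping obstacle.

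For the number-theoretic input, write $m=p^{a}m'$ with $p\nmid m'$. The distinct roots of $\overline{\Phi_m}$ in $\overline{\mathbb{F}}_p$ are the primitive $m'$-th roots of unity, and the Frobenius $x\mapsto x^{p}$ permutes them in orbits of length $f=\func{ord}_{m'}(p)$; equivalently, applying the Dedekind--Kummer theorem to $K=\Q(\zeta_m)$ — choosing a generator $\alpha$ of $K$ with $p\nmid[\mathcal{O}_K:\Z[\alpha]]$ so that the factorization type of $p\mathcal{O}_K$ is read off from a minimal polynomial modulo $p$ — shows that every irreducible factor of $\overline{\Phi_m}$ has degree exactly $f$. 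It remains to choose $p$ so that $f\ge 2$. If $m$ is not a power of $2$, take $p=2$: then $m'\ge 3$ is the odd part of $m$, and $2\not\equiv 1\pmod{m'}$ forces $f=\func{ord}_{m'}(2)\ge 2$. If $m$ is a power of $2$ (so $m\in\{4,8,16,\dots\}$, as $m\ge 3$), take $p=3$: then $m'=m$ and $3\not\equiv 1\pmod{2^{a}}$ for $a\ge 2$ gives $f=\func{ord}_{m}(3)\ge 2$.

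Combining the three ingredients finishes the proof: for every $m\ge 3$ we have exhibited $p\in\{2,3\}$ with $\overline{\Phi_m}$ carrying an irreducible factor of degree $f\ge 2$, yet $\overline{\Phi_m}\mid\overline{A_n^g}$ with $\overline{A_n^g}$ a product of linear factors over $\mathbb{F}_p$ — impossible. Hence $P_n^g(\zeta_m)\ne 0$ for all $n$, and the special case $m=4$ (handled by $p=3$) yields $P_n^g(\pm i)\ne 0$.
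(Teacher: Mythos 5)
Your proposal is correct and follows essentially the same route as the paper: reduce modulo $p\in\{2,3\}$, show that $A_n^g$ splits into linear factors over $\mathbb{F}_p$ (using $g(3)\equiv 0,1\pmod 3$ exactly where the paper does), and contrast this with the fact that every irreducible factor of $\Phi_m$ modulo $p$ has degree equal to the inertial degree $f\ge 2$ for the appropriate choice of $p$. The only cosmetic difference is that you obtain the splitting of $\overline{A_n^g}$ by a direct closed-form induction on the truncated recurrence, whereas the paper packages the same observation as the periodicity lemma $A_{\ell p+r}^g\equiv A_r^g\,(A_p^g)^{\ell}\pmod p$.
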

Note that $\sigma(3) \equiv 1 \pmod{3}$. Therefore $P_n^g(\zeta_m)\neq 0$ for all $n \in \mathbb{N}$ and
$m \geq 3$. Further, $P_n^{\sigma}\left( {\zeta_2}\right) =0$ if
and only if $n$ is not a (generalized) pentagonal number.
\newline
\newline
To state our next result we
introduce the following notation.
Let $\mathcal{Z}_n^g$ be the set of zeros of $P_n^g(x)$. The set of all zeros is a subset
of the set of algebraic integers
and is denoted by
\begin{equation*}
\mathcal{Z}^g = \bigcup_{n=1}^{\infty} \mathcal{Z}_n^g \subset \overline{\mathbb{Z}}.
\end{equation*}
\renewcommand{\theenumi}{\roman{enumi}}
\renewcommand{\labelenumi}{\theenumi)}
\begin{theorem} \label{theorem:main}
Let $g$ be a normalized integer-valued arithmetic function.
Let $m \geq 3$ and $\zeta_m$ be
an $m$th primitive root of unity. 
Let $K$ be the $m$th cyclotomic field and $\mathcal{O}_K$ the ring of
integers.
Then we have the following.
\begin{enumerate}
\item  Let a prime $p\neq 2$ exist
such that $p \mid m$. Then
\begin{equation*}
\left\{ \zeta_m + 2 \, \beta \, : \, \beta \in \mathcal{O}_K \right\} \, \cap \, \mathcal{Z}^g = \emptyset.
\end{equation*}
\item  Let $m \geq 1$ be not of the form $
\left\{2^a \, 3^{\ell} \, : \, a \in \{0,1\} \text{ and }\ell \in \mathbb{N}_0 \right\}$.
Let $g(3) \equiv 0,1 \pmod{3}$
Then
\begin{equation*}
\left\{ \zeta_m + 3 \, \beta  \, : \, \beta  \in \mathcal{O}_K \right\} \, 
\cap \, \mathcal{Z}^g = \emptyset.
\end{equation*}
\end{enumerate}
\end{theorem}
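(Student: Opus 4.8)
The plan is to argue by contradiction through reduction modulo a prime of $\mathcal{O}_K$ lying over $p$, where $p=2$ in part i) and $p=3$ in part ii). Suppose some $\gamma = \zeta_m + p\beta$ with $\beta \in \mathcal{O}_K$ lies in $\mathcal{Z}^g$, so that $A_n^g(\gamma)=0$ for some $n$ (recall $A_n^g = n!\,P_n^g \in \mathbb{Z}[x]$ is monic, so $\gamma \in \mathcal{O}_K$ and the evaluation really is zero). I would fix a prime ideal $\mathfrak{p} \mid p$ of $\mathcal{O}_K$. Since $\gamma \equiv \zeta_m \pmod{p\mathcal{O}_K}$ and $p\mathcal{O}_K \subseteq \mathfrak{p}$, and $A_n^g$ has integer coefficients, one gets $A_n^g(\zeta_m) \equiv A_n^g(\gamma) = 0 \pmod{\mathfrak{p}}$. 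Thus the image $\overline{\zeta_m}$ in the residue field $\mathbb{F} := \mathcal{O}_K/\mathfrak{p}$ is a root of the reduction $\overline{A_n^g} \in \mathbb{F}_p[x]$. The whole argument then reduces to a clash between the multiplicative order of $\overline{\zeta_m}$ in $\mathbb{F}$ and the possible roots of $\overline{A_n^g}$.

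Next I would pin down $\overline{\zeta_m}$. Writing $m = p^{a} m'$ with $p \nmid m'$, the $p$-part of $\zeta_m$ collapses modulo $\mathfrak{p}$ (a $p$-power root of unity reduces to $1$ in characteristic $p$), while the prime-to-$p$ part reduces to a primitive $m'$-th root of unity, as follows from the separability of $x^{m'}-1$ modulo $p$, equivalently from the Dedekind--Kummer description of the splitting of $p$ in $K$. Hence $\overline{\zeta_m}$ has exact order $m'$. In part i) the hypothesis that some odd prime divides $m$ forces $m' > 1$, so $\overline{\zeta_m} \neq 0,1$ and therefore $\overline{\zeta_m} \notin \mathbb{F}_2$. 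In part ii), writing $m = 3^{b} m''$ with $3 \nmid m''$, the hypothesis that $m$ is not of the form $2^{a}3^{\ell}$ with $a \in \{0,1\}$ is exactly equivalent to $m'' \notin \{1,2\}$; since $\lvert\mathbb{F}_3^\times\rvert = 2$, this is in turn equivalent to $\overline{\zeta_m} \notin \mathbb{F}_3$. So in both cases $\overline{\zeta_m} \notin \mathbb{F}_p$.

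The crux is therefore to show that every root of $\overline{A_n^g}$ already lies in $\mathbb{F}_p$, which would contradict $\overline{\zeta_m}$ being such a root. I would establish this from the identity $A_n^g(x) = \sum_{\pi \in S_n} x^{c(\pi)} \prod_{C} g(\lvert C\rvert)$, where $c(\pi)$ counts the cycles $C$ of $\pi$; this follows from the generating function $\sum_n P_n^g(x)\,q^n = \exp\!\big(x\sum_{k\ge1} g(k) q^k/k\big)$ via the exponential formula. Letting a fixed $p$-cycle (for $p=3$) or transposition (for $p=2$) act on $S_n$ by conjugation — an action of a group of order $p$ preserving the weight — the orbits of size $p$ contribute $0$ modulo $p$, so only the centralizer of the chosen element survives, which yields the clean reductions
\begin{equation*}
A_n^g(x) \equiv (x^2 + g(2)x)\,A_{n-2}^g(x) \pmod 2, \qquad A_n^g(x) \equiv (x^3 + 2g(3)x)\,A_{n-3}^g(x) \pmod 3,
\end{equation*}
for $n \ge 2$ and $n \ge 3$ respectively, with the small cases $A_0, A_1, A_2$ checked directly. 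Iterating, $\overline{A_n^g}$ is a product of such blocks together with $A_0, A_1, A_2$; the relevant factors are $x^2+g(2)x = x(x+g(2))$ and, over $\mathbb{F}_3$, $x^3+2g(3)x = x(x^2-g(3))$. Over $\mathbb{F}_2$ these split completely for any $g$; over $\mathbb{F}_3$ the quadratic $x^2-g(3)$ splits precisely when $g(3)$ is a square in $\mathbb{F}_3$, i.e. when $g(3)\equiv 0,1 \pmod 3$ — exactly the hypothesis of part ii). Thus under the stated assumptions all roots of $\overline{A_n^g}$ lie in $\mathbb{F}_p$, contradicting that the root $\overline{\zeta_m}$ lies outside $\mathbb{F}_p$, and completing the argument. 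The main obstacle is the factorization lemma: proving the two congruences rigorously, i.e. the group-action bookkeeping on cycle types and the clean base cases, is where the real work lies; once it is in place the number-theoretic reduction and the order computation are short.
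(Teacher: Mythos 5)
Your proof is correct, and although it rests on the same two pillars as the paper's --- that $A_n^g(x)\bmod p$ splits into linear factors over $\mathbb{F}_p$ (for $p=2$ unconditionally, for $p=3$ when $g(3)\equiv 0,1\pmod 3$), and that $\zeta_m$ generates a nontrivial residue-field extension of $\mathbb{F}_p$ under the stated hypotheses on $m$ --- you reach the second pillar by a genuinely different and leaner route. The paper sets $\alpha=\zeta_m+p\beta$, proves by a tensor-product argument that $p\nmid[\mathcal{O}_K:\mathbb{Z}[\alpha]]$, and then invokes Dedekind--Kummer to see that the minimal polynomial of $\alpha$ modulo $p$ has an irreducible factor of degree $f>1$, contradicting the fact that it must divide $A_n^g$ modulo $p$. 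You instead reduce the hypothetical relation $A_n^g(\zeta_m+p\beta)=0$ modulo a single prime $\mathfrak{p}\mid p$, note that $\overline{\zeta_m}$ is then a root of $\overline{A_n^g}$ in $\mathcal{O}_K/\mathfrak{p}$, and compute the multiplicative order of $\overline{\zeta_m}$ directly (the $p$-power part collapses to $1$, the prime-to-$p$ part keeps its exact order $m'$ by separability of $x^{m'}-1$), so that $\overline{\zeta_m}\notin\mathbb{F}_p$ precisely under the hypotheses of i) and ii), while every root of $\overline{A_n^g}$ lies in $\mathbb{F}_p$. This bypasses both the index lemma and Dedekind--Kummer, and as a bonus never needs $\zeta_m+p\beta$ to be a primitive element of $K$, which the paper's route asserts without comment. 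For the first pillar you also replace the paper's recurrence-based congruence $A_{\ell p+r}^g\equiv A_r^g\,\left(A_p^g\right)^{\ell}\pmod p$ by the cycle-index identity $A_n^g(x)=\sum_{\pi\in S_n}x^{c(\pi)}\prod_{C}g(\lvert C\rvert)$ together with conjugation by a cyclic group of order $p$; your centralizer computation produces exactly the factors $x^2+g(2)x$ and $x^3+2g(3)x\equiv x\left(x^2-g(3)\right)\pmod 3$, i.e.\ $A_2^g$ and $A_3^g$ modulo the respective primes, so the two factorization lemmas coincide. The only part still to be written out in full is the orbit/centralizer bookkeeping you flag yourself (conjugation preserves cycle type and hence the weight, orbits of size $p$ vanish modulo $p$, and the centralizer of a transposition or $3$-cycle is $\langle\tau\rangle\times S_{n-p}$), and that is routine; nothing in the proposal is at risk.
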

\begin{corollary}
Let $g$ be a normalized integer-valued arithmetic function. 
Let $g(3) \equiv 0,1 \pmod{3}$. Then we have 
for all $m \geq 3$ and $n \geq 1$ that
$P_n^g( \zeta_m + 6 \beta) \neq 0$ 
for all $\beta \in \mathbb{Z}[\zeta_m]$.
\end{corollary}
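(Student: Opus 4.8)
The plan is to obtain the corollary as a direct consequence of the two parts of Theorem~\ref{theorem:main}, via a case split on the prime factorization of $m$, after observing that a translate by $6\beta$ can be absorbed into either the $2\mathcal{O}_K$-translate or the $3\mathcal{O}_K$-translate occurring there. First I would note the elementary inclusion $\mathbb{Z}[\zeta_m] \subseteq \mathcal{O}_K$ (each $\beta \in \mathbb{Z}[\zeta_m]$ is an algebraic integer lying in $K$), so that $2\beta, 3\beta \in \mathcal{O}_K$. The crux is then the identity $\zeta_m + 6\beta = \zeta_m + 2(3\beta) = \zeta_m + 3(2\beta)$, which shows that $\zeta_m + 6\beta$ belongs both to $\{\zeta_m + 2\gamma : \gamma \in \mathcal{O}_K\}$ and to $\{\zeta_m + 3\delta : \delta \in \mathcal{O}_K\}$. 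It therefore suffices to check that for every $m \geq 3$ at least one of the hypotheses of parts (i) and (ii) is satisfied.

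To this end I would distinguish whether $m$ is a power of two. If $m$ admits an odd prime divisor $p$, then $p \neq 2$ and $p \mid m$, so part (i) applies and gives $\{\zeta_m + 2\gamma : \gamma \in \mathcal{O}_K\} \cap \mathcal{Z}^g = \emptyset$; choosing $\gamma = 3\beta$ yields $\zeta_m + 6\beta \notin \mathcal{Z}^g$. Otherwise $m = 2^a$, and since $m \geq 3$ necessarily $a \geq 2$, so $m$ is not of the excluded form $2^{a'} 3^{\ell}$ with $a' \in \{0,1\}$; invoking the standing hypothesis $g(3) \equiv 0,1 \pmod{3}$, part (ii) applies and gives $\{\zeta_m + 3\delta : \delta \in \mathcal{O}_K\} \cap \mathcal{Z}^g = \emptyset$, and choosing $\delta = 2\beta$ yields $\zeta_m + 6\beta \notin \mathcal{Z}^g$. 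In both cases $\zeta_m + 6\beta \notin \mathcal{Z}^g = \bigcup_{n \geq 1} \mathcal{Z}_n^g$, which is exactly the assertion that $P_n^g(\zeta_m + 6\beta) \neq 0$ for all $n \geq 1$.

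Since the statement is a formal corollary, there is no serious obstacle; the one point deserving explicit verification is that the case split is exhaustive, i.e.\ that every $m \geq 3$ failing hypothesis (ii) does satisfy hypothesis (i). This I would confirm by observing that the values $m \geq 3$ excluded in (ii) are precisely those of the form $2^a 3^{\ell}$ with $a \in \{0,1\}$; being at least $3$, these all have $\ell \geq 1$, hence are divisible by $3$ and thus carry the odd prime divisor required by (i). The overlap of the two ranges of applicability closes the argument.
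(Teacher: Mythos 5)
Your proof is correct and is exactly the intended derivation: the paper states this as an immediate corollary of Theorem~\ref{theorem:main}, with $\zeta_m+6\beta=\zeta_m+2(3\beta)=\zeta_m+3(2\beta)$ placing the point in both translate sets, and your case split (odd prime divisor of $m$ versus $m=2^a$ with $a\geq 2$) correctly shows that one of parts (i), (ii) applies for every $m\geq 3$. Nothing further is needed.
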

\section{Results from Algebraic Number Theory}
We recall basic
notation and results. For further details, we refer to
\cite{La94,Leu96,ME04, Ma10}.
\subsection{Dedekind--Kummer}
Let $K \supseteq \mathbb{Q}$ be a number field and $\mathcal{O}_K$ 
the ring of integers in $K$.
There always exists an element $\alpha$ with 
$K=\mathbb{Q}(\alpha)$ called  primitive element.
Let us assume that $\alpha \in \mathcal{O}_K$. Let 
$\mathcal{O}_{K, \alpha}:= \mathbb{Z}[\alpha]$, which is an order in $K$.
The index
\begin{equation*}
\kappa_{\alpha} := [ \mathcal{O}_K : \mathcal{O}_{K, \alpha}].
\end{equation*}
is finite. Let $g_{\alpha}(x):= \func{Min}_{\alpha}(x)\in \mathbb{Z}[x]$ be the minimal polynomial of $\alpha$. 
Let $\Delta_K$ be the discriminant of $K$.
The discriminant $\func{disc}(g_{\alpha})$ of the minimal polynomial satisfies
\begin{equation*}
\func{disc}(g_{\alpha}) = \kappa_{\alpha}^2 \,\, \Delta_K.
\end{equation*}
The norm $N(\frak{a})$ of an
ideal $\mathfrak{a} \neq \{0 \}$ is defined as the finite index
of $\mathfrak{a}$ in $\mathcal{O}_K$.
Let $\mathfrak{a}$ be an ideal in $\mathcal{O}_K$, different from $\{0\}$ and $\mathcal{O}_K$.
Then $\mathfrak{a}$ decomposes uniquely into a finite product of prime ideals, up to the order, 
since $\mathcal{O}_K$ is a Dedekind domain. Let 
\begin{equation}\label{decomposition}
\mathfrak{a} = \mathfrak{p}_1^{e_1} \cdot    \mathfrak{p}_2^{e_2}         
 \cdots
\mathfrak{p}_g^{e_g}.
\end{equation}
Then $e_k$ denotes the ramification index of $\mathfrak{p}_k$. Let
$f_k$ be the residue class degree or inertial degree of $\mathfrak{p}_k$ 
determined by the norm $N(\frak{p}_k) = p^{f_k}$.
Here $ \mathbb{Z} \cap \mathfrak{p}_k= p \mathbb{Z}$.
A prime $p$ is called ramified if at least one ramification index $e_k$ in the decomposition of
$\mathfrak{a} = p \mathcal{O}_K$ is larger than $1$. It is known that a prime $p$
is ramified if and only if $p \mid
\Delta_K$.

Rather than first specifying the field, we start with an algebraic integer 
$\alpha$. We define $K:=\mathbb{Q}(\alpha)$ and have 
\begin{equation*}
K \supset \mathcal{O}_K \supseteq \mathcal{O}_{K, \alpha}.
\end{equation*}
We are interested in the prime ideal decomposition of the principal ideals $p \mathcal{O}_K$ in $\mathcal{O}_K$ and the associated ramification indices and inertia degrees.
This is a consequence of the Dedekind--Kummer theorem.
\begin{theorem}[Dedekind--Kummer]\label{D-K}
Let $\alpha$ be an algebraic integer and the primitive element of an algebraic number field $K:=\mathbb{Q}(\alpha)$. Let $p$ be any prime with $p \nmid \kappa_{\alpha}$. Let $g_{\alpha}\left( x
\right) $ be the minimal polynom of $\alpha$. Then
\begin{equation*}
p \mathcal{O}_K = \prod_k \mathfrak{p}_k^{e_k} \,\, 
\Longleftrightarrow \,\, 
g_{\alpha}(x) \equiv \prod_k \left( g_{\alpha,k}(x) \right)^{e_k} \pmod{p},
\end{equation*}
where the polynomials $g_{\alpha,k}(x) \in \mathbb{F}_p[x]$ are irreducible. Up to the order we have
$\func{degree}_{\mathbb{F}_p} g_{\alpha,k}\left( x\right) = f_k$.
\end{theorem}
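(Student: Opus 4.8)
The plan is to prove the theorem by establishing a single ring isomorphism that transports the decomposition of $p\mathcal{O}_K$ into the factorization of $g_\alpha \pmod p$, and then matching the local factors on both sides. Write $R := \mathbb{Z}[\alpha] = \mathcal{O}_{K,\alpha}$ and let $\bar g_\alpha \in \mathbb{F}_p[x]$ denote the reduction of $g_\alpha$ modulo $p$. Since $\alpha$ is a root of its minimal polynomial, $R \cong \mathbb{Z}[x]/(g_\alpha(x))$, and reducing modulo $p$ gives the identification $R/pR \cong \mathbb{F}_p[x]/(\bar g_\alpha(x))$, a finite commutative $\mathbb{F}_p$-algebra of dimension $\deg g_\alpha = [K:\mathbb{Q}]$.

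The crucial step is to show that the inclusion $R \hookrightarrow \mathcal{O}_K$ induces an isomorphism
\[
R/pR \;\cong\; \mathcal{O}_K/p\mathcal{O}_K .
\]
This is where the hypothesis $p \nmid \kappa_\alpha$ enters. The quotient $\mathcal{O}_K/R$ is a finite abelian group of order $\kappa_\alpha = [\mathcal{O}_K : R]$, which is coprime to $p$; hence multiplication by $p$ is an automorphism of $\mathcal{O}_K/R$. First I would deduce from this that $\mathcal{O}_K = R + p\mathcal{O}_K$, so that the induced map $R/pR \to \mathcal{O}_K/p\mathcal{O}_K$ is surjective. A dimension count over $\mathbb{F}_p$ (both sides have dimension $[K:\mathbb{Q}]$, since $\mathcal{O}_K$ is free of rank $[K:\mathbb{Q}]$ over $\mathbb{Z}$) then forces the map to be an isomorphism; alternatively one argues via the vanishing of $\func{Tor}$ after tensoring the inclusion sequence with $\mathbb{Z}/p\mathbb{Z}$.

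Granting this isomorphism, I would apply the Chinese Remainder Theorem to both presentations of $\mathcal{O}_K/p\mathcal{O}_K$. On the ideal side, the decomposition $p\mathcal{O}_K = \prod_k \mathfrak{p}_k^{e_k}$ into distinct prime ideals yields $\mathcal{O}_K/p\mathcal{O}_K \cong \prod_k \mathcal{O}_K/\mathfrak{p}_k^{e_k}$, a product of local Artinian rings with residue fields $\mathcal{O}_K/\mathfrak{p}_k \cong \mathbb{F}_{p^{f_k}}$. On the polynomial side, factoring $\bar g_\alpha = \prod_k \bar g_{\alpha,k}^{\,e_k}$ into powers of distinct monic irreducibles gives $\mathbb{F}_p[x]/(\bar g_\alpha) \cong \prod_k \mathbb{F}_p[x]/(\bar g_{\alpha,k}^{\,e_k})$, again a product of local rings with residue fields $\mathbb{F}_p[x]/(\bar g_{\alpha,k}) \cong \mathbb{F}_{p^{\deg \bar g_{\alpha,k}}}$. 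Because the decomposition of a finite commutative ring into local factors is unique, the two factorizations must match term by term: the irreducible factors $g_{\alpha,k}$ correspond bijectively to the primes $\mathfrak{p}_k$, their multiplicities coincide with the ramification indices $e_k$, and comparing residue fields gives $f_k = \deg_{\mathbb{F}_p} g_{\alpha,k}$. This simultaneous correspondence delivers both directions of the stated equivalence. To make the matching explicit and to verify that $\mathfrak{p}_k = p\mathcal{O}_K + \widetilde g_{\alpha,k}(\alpha)\,\mathcal{O}_K$ for a lift $\widetilde g_{\alpha,k} \in \mathbb{Z}[x]$ of each factor, I would trace the idempotents of the product decomposition back through the isomorphism.

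The main obstacle is precisely the isomorphism $R/pR \cong \mathcal{O}_K/p\mathcal{O}_K$: this is the one place where $p \nmid \kappa_\alpha$ is indispensable, and the conclusion genuinely fails when $p$ divides the index. Once this bridge is in place, the remainder is the bookkeeping of matching local factors and identifying residue-field degrees with inertia degrees, which is routine.
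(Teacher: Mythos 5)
The paper does not prove this statement at all: it is quoted as a classical theorem and referred to the textbooks \cite{La94,Leu96,ME04,Ma10}, so there is no in-paper argument to compare yours against. Your proposal is the standard textbook proof and it is correct. The two pillars are exactly right: (a) the isomorphism $\mathbb{Z}[\alpha]/p\mathbb{Z}[\alpha]\cong\mathcal{O}_K/p\mathcal{O}_K$, obtained from $p\nmid\kappa_\alpha$ either by the surjectivity-plus-counting argument or by tensoring the inclusion sequence with $\mathbb{Z}/p\mathbb{Z}$ and noting that a finite group of order prime to $p$ has trivial $p$-torsion and trivial $p$-cotorsion (incidentally, this is the same mechanism the paper uses in its Lemma \ref{technical}); and (b) the Chinese Remainder decomposition of both presentations of this Artinian ring into local factors, matched via the uniqueness of that decomposition. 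The only step I would flesh out is how the exponents are recovered after the local factors are matched: you should say explicitly that the nilpotency index of the maximal ideal of $\mathcal{O}_K/\mathfrak{p}_k^{e_k}$ is exactly $e_k$ (using $\mathfrak{p}_k^{e_k-1}\neq\mathfrak{p}_k^{e_k}$, which follows from unique factorization of ideals), and likewise that the maximal ideal of $\mathbb{F}_p[x]/(\bar g_{\alpha,k}^{\,e_k})$ has nilpotency index $e_k$, so that the matched local rings force the multiplicity of $\bar g_{\alpha,k}$ to equal the ramification index. With that sentence added, the argument is complete and is precisely what the cited references do.
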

Next, we take a closer look at cyclotomic fields.

\subsection{Cyclotomic Fields}
Let $m \geq 1$. We denote by
$\zeta_m$
a primitive $m$th root of unity. Let $K_m := \mathbb{Q}(\zeta_m)$
be the $m$th cyclotomic field. Then $K=K_{m}$ has a power basis with $\mathcal{O}_K = \mathbb{Z}[\zeta_m]$.
Moreover, $K_m \, / \mathbb{Q}$ is a Galois extension of degree $\varphi(m)$, where $\varphi$ is the Euler totient function. Further, $p$ is ramified in $\mathcal{O}_K$ if and only if $p \mid m$.
Since we have a Galois extension, the prime ideal decomposition (\ref{decomposition}) simplifies:
\begin{equation*}
p \mathcal{O}_K = \prod_{k=1}^g \mathfrak{p}_k^e, \text{ where } f_k = f \text{ and } \varphi(m)=e \, f \, g.
\end{equation*}
The inertial degree $f$ of $p$ in $\mathcal{O}_K$ can be explicitly calculated.
Let $m_p$ be the largest divisor of $m$, which is coprime to $p$.
Then, it is well-known that $f$ is the smallest positive integer such that
\begin{equation*}\label{f}
p^f \equiv 1 \pmod{m_{p}}.
\end{equation*}
Let $R_p$ be the set of $m \geq 1$ such that the inertial degree $f$ of
$p \mathcal{O}_{K_m}$ is $1$ (note that $f$ is unique since cyclotomic fields are Galois extensions).
This leads to
\begin{equation*}
R_2  = \left\{ 2^{\ell} \, : \, \ell \in \mathbb{N}_0 \right\}
\text{ and }
R_3  =  \left\{2^a \, 3^{\ell} \, : \, a \in \{0,1\} \text{ and }\ell \in \mathbb{N}_0 \right\}.
\end{equation*}

\subsection{Proof of Theorem \ref{heimneuhauser}}
We first consider $A_n^g(x) \pmod{p}$. 
\begin{lemma}\label{A:decomposition}
Let $g$ be a normalized $\mathbb{Z}$-valued arithmetic function. Let $p$ be a prime number. Then we have
\begin{equation*}\label{r}
A_{\ell\, p + r}^g(x) \equiv A_r^g(x) \, \left( A_p^g \right)^{\ell} \pmod{p}, \,\, \text{ where } 
0 \leq r < p.
\end{equation*}
Further, let $\{P_n^g(x)\}_n$
be integer-valued polynomials.
Then 
\begin{equation*}
A_p^g(x) \equiv x\, (x-1) \ldots (x-p+1) \pmod{p}.
\end{equation*}
\end{lemma}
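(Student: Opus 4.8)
The plan is to reduce the first congruence to the single relation $A_n^g \equiv A_p^g\,A_{n-p}^g \pmod p$ for $n \ge p$, and then to read off both parts from an explicit description of $A_n^g$ as a weighted sum over the symmetric group. First I would record the combinatorial formula behind $\sum_{n\ge 0} A_n^g(x)\,q^n/n! = \exp\!\big(x\sum_{k\ge 1} g(k)\,q^k/k\big)$. Since a single $k$-cycle on $k$ labelled points can be built in $(k-1)!$ ways and is weighted by $x\,g(k)$, the exponential formula yields
\[
A_n^g(x) \;=\; \sum_{\sigma \in S_n}\; \prod_{C \text{ cycle of } \sigma} x\, g(|C|),
\]
a sum over $S_n$ in which each permutation carries the multiplicative weight $w(\sigma):=\prod_C x\,g(|C|)$. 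In particular the identity contributes $x^n$ (using $g(1)=1$) and is the unique term of top $x$-degree, so $A_n^g$ is monic of degree $n$; I will reuse this in the second part.

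The heart of the argument is the relation $A_n^g \equiv A_p^g\,A_{n-p}^g \pmod p$. I would fix the $p$-cycle $\gamma=(1\,2\,\cdots\,p)\in S_n$ and let $\mathbb{Z}/p=\langle\gamma\rangle$ act on $S_n$ by conjugation. Conjugation preserves cycle type, hence the weight $w$, and every orbit has size $1$ or $p$; the size-$p$ orbits contribute multiples of $p$, so $A_n^g\equiv\sum_{\sigma\in Z(\gamma)}w(\sigma)\pmod p$, where $Z(\gamma)$ is the centralizer of $\gamma$. Since $\gamma$ consists of one $p$-cycle on $\{1,\dots,p\}$ together with $n-p$ fixed points, one has $Z(\gamma)=\langle\gamma\rangle\times S_{\{p+1,\dots,n\}}$, and as the two factors have disjoint support the weight factorizes, giving $\sum_{\sigma\in Z(\gamma)}w(\sigma)=\big(\sum_{a=0}^{p-1}w(\gamma^a)\big)\,A_{n-p}^g$. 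Because $p$ is prime, $\gamma^0$ has $p$ fixed points while each $\gamma^a$ with $1\le a\le p-1$ is a single $p$-cycle, so $\sum_{a=0}^{p-1}w(\gamma^a)=x^p+(p-1)\,x\,g(p)$. The same computation with $n=p$ (empty second factor, $A_0^g=1$) shows this quantity is $\equiv A_p^g \pmod p$; hence $A_n^g\equiv A_p^g\,A_{n-p}^g \pmod p$, and iterating on $\ell$ gives $A_{\ell p+r}^g\equiv A_r^g\,(A_p^g)^\ell \pmod p$ for $0\le r<p$. I expect the only delicate point to be the bookkeeping of the centralizer together with the disjoint-support factorization of $w$.

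For the second statement I would invoke the integer-valued hypothesis directly. Writing $P_p^g=\sum_{k=0}^p a_k\binom{x}{k}$ with $a_k\in\mathbb{Z}$, which is legitimate since the $\binom{x}{k}$ form a $\mathbb{Z}$-basis of the integer-valued polynomials of degree at most $p$, multiplication by $p!$ gives $A_p^g=\sum_{k=0}^p a_k\,\tfrac{p!}{k!}\,x(x-1)\cdots(x-k+1)$. For $k<p$ the integer $\tfrac{p!}{k!}=p(p-1)\cdots(k+1)$ is divisible by $p$, while for $k=p$ it equals $1$; comparing leading coefficients and using that $A_p^g$ is monic forces $a_p=1$. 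Therefore $A_p^g\equiv x(x-1)\cdots(x-p+1)\pmod p$, completing the proof.
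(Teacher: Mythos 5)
Your proof is correct, but it takes a genuinely different route from the paper. The paper reduces everything to the integrality of the recursion coefficients: from $A_n^g(x) = x \sum_{k=1}^n \frac{(n-1)!}{(n-k)!}\, g(k)\, A_{n-k}^g(x)$ one reduces $A_{\ell p+1}^g,\dots,A_{(\ell+1)p}^g$ modulo $p$ step by step to $A_{\ell p}^g$, following Heim--Luca--Neuhauser and \.{Z}mija's Lemma~5 (which is also the cited source for the factorization of $A_p^g$ in the integer-valued case). You instead pass through the exponential formula, writing $A_n^g(x)=\sum_{\sigma\in S_n}\prod_C x\,g(|C|)$, and kill all non-central terms by letting $\langle\gamma\rangle\cong\mathbb{Z}/p$ act by conjugation; the centralizer computation $Z(\gamma)=\langle\gamma\rangle\times S_{\{p+1,\dots,n\}}$ and the disjoint-support factorization of the weight are correct, and the single relation $A_n^g\equiv A_p^g A_{n-p}^g\pmod p$ does iterate to the stated congruence. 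Your orbit argument has the bonus of producing the explicit identity $A_p^g\equiv x^p-g(p)\,x\pmod p$, which makes transparent that the second assertion is equivalent to $g(p)\equiv 1\pmod p$; your derivation of that assertion from the $\mathbb{Z}$-basis $\binom{x}{k}$ of integer-valued polynomials, the divisibility of $p!/k!$ by $p$ for $k<p$, and the monicity of $A_p^g$ is a clean, self-contained replacement for the citation. What the paper's route buys is elementarity (no exponential formula, no group action, just the defining recurrence); what yours buys is conceptual clarity and explicit closed forms modulo $p$.
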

\begin{proof} For $g=\sigma$ the proof is given in \cite{HLN19}. 
Let the polynomials be integer-valued.
Then the proof is given by \.{Z}mija (\cite{Zm23}, Lemma 5). 
The general case is proven in the same way.
The basic ingredient is provided by
\begin{equation*}
A_n^g(x) = x \left(           \sum_{k=1}^n   \frac{(n-1)!}{(n-k)!} \,\, g(k) \, \, A_{n-k}^g(x)\right).
\end{equation*}
Then we reduce $\pmod{p}$ the following polynomials
$$ A_{\ell \, p +1}^g(x), A_{\ell \, p +2}^g(x),
\ldots, A_{(\ell +1) \, p}^g(x)$$
step by step to $A_{\ell \, p}^g(x)$. 
\end{proof}
We have $A_0^g(x)=1$, $A_1^g(x)=x$ and $A_2^g(x) = x \left( x + g(2)\right)$.
Therefore, $A_n^g(x) \pmod{2}$ decomposes into linear factors for all $n$.

Let $m= 2^{\ell}$, $ \ell \geq 2$. Then we study $A_n^g(x) \pmod{3}$, which is essentially $A_3^g(x) \pmod{3}$.
\begin{lemma} \label{A3}
Let $g$ be a normalized $\mathbb{Z}$-valued arithmetic function. Then
$A_3^g(x) \equiv x^2 \, \left( x + 3 g(2)\right) \pmod{2}$. Further,
\begin{equation*}
A_3^g(x) \equiv x \left( x^2 - g(3) \right) \pmod{3}.
\end{equation*}
Therefore, $A_3^g(x) \pmod{3}$ decomposes into linear factors if
and only if $g(3) \equiv 0,1 \pmod{3}$.
\end{lemma}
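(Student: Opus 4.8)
The plan is to compute $A_3^g(x)$ explicitly from the recurrence recorded in the proof of Lemma~\ref{A:decomposition}, namely $A_n^g(x) = x \sum_{k=1}^n \frac{(n-1)!}{(n-k)!}\, g(k)\, A_{n-k}^g(x)$, and then to read off both congruences together with the factorization criterion by direct inspection. Setting $n=3$ and inserting the known values $A_0^g(x)=1$, $A_1^g(x)=x$, $A_2^g(x)=x(x+g(2))$ as well as $g(1)=1$, the three summands $k=1,2,3$ contribute $A_2^g(x)=x(x+g(2))$, then $2\,g(2)\,A_1^g(x)=2g(2)x$, and finally $2\,g(3)\,A_0^g(x)=2g(3)$. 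Multiplying by the leading factor $x$ yields
$$A_3^g(x) = x\bigl(x^2 + 3g(2)\,x + 2g(3)\bigr) = x^3 + 3g(2)\,x^2 + 2g(3)\,x.$$

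Next I would reduce this polynomial modulo the two primes. Modulo $2$ the term $2g(3)x$ vanishes, leaving $A_3^g(x) \equiv x^3 + 3g(2)x^2 = x^2\bigl(x + 3g(2)\bigr) \pmod 2$, which is the first assertion. Modulo $3$ the quadratic term $3g(2)x^2$ vanishes and $2 \equiv -1$, so $A_3^g(x) \equiv x^3 - g(3)\,x = x\bigl(x^2 - g(3)\bigr) \pmod 3$, the second assertion.

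For the factorization criterion, the factor $x$ is already linear, so $A_3^g(x) \bmod 3$ splits into linear factors over $\mathbb{F}_3$ precisely when $x^2 - g(3)$ does, that is, precisely when $g(3)$ is a square in $\mathbb{F}_3$. Enumerating $0^2=0$ and $1^2 = 2^2 = 1$ shows that the squares are exactly $\{0,1\}$, so $x^2 - g(3)$ splits iff $g(3) \equiv 0,1 \pmod 3$; when $g(3)\equiv 2$ the polynomial $x^2-2 = x^2+1$ has no root in $\mathbb{F}_3$ and is therefore irreducible. There is no genuine obstacle here: the whole argument is a finite computation, and the only point requiring any care is the quadratic-residue check modulo $3$, which is settled by the short enumeration above.
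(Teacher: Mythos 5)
Your proof is correct and follows essentially the same route as the paper: compute $A_3^g(x) = x\left(x^2 + 3g(2)x + 2g(3)\right)$ from the recurrence and analyze the quadratic factor, where the paper phrases the mod-$3$ criterion via the discriminant of the quadratic while you reduce directly and enumerate the squares in $\mathbb{F}_3$ --- the same quadratic-residue condition either way. If anything, your write-up is more complete, since the paper's proof omits the derivation of the cubic from the recurrence and the explicit mod-$2$ reduction.
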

\begin{proof}
 Further,
\begin{equation*}
A_3^g(x) = x \left( x^2 + 3 g(2)x + 2 g(3) \right).
\end{equation*}
Then the solutions of $A_3^g(x)=0$ are $x_1=0$ and 
\begin{equation*}
x_{2/3} = \frac{-3 g(2) \pm \sqrt{9 g(2)^2 - 8 g(3)}}{2}.
\end{equation*}
Therefore, there $A_3^g(x)/x \pmod{3}$ is irreducible if and
only if $g(3)$ is not a quadratic residue $\pmod{3}$.
\end{proof}
Suppose $P_n^g(\zeta_m) =0$. Then $\func{Min}_{\zeta_m}$ divides $A_n^g(x)$.
But since $2^{\ell} \not\in R_3$, we have that $\func{Min}_{\zeta_m}(x) \pmod{3}$ does not
decompose into linear factors, which is a contradiction to the assumption that $P_n^g(\zeta_m) =0$.

Now, let $m \neq 2^{\ell}$ and $m \geq 3$.
Then $A_n^g(x) \pmod{2}$ decomposes into linear factors.
Suppose $P_n^g(\zeta_m) =0$. Then $\func{Min}_{\zeta_m}$ divides $A_n^g(x)$.
But since $m \not\in R_2$, the same argument as before leads to a contradiction.
Therefore, Theorem \ref{heimneuhauser} is proven.
\subsection{Proof of Theorem \ref{theorem:main}}
Let $m \geq 3$ and $K=\mathbb{Q}(\zeta_m) \supset \mathcal{O}_K$.
The following result gives us control over the index 
$\kappa_{\alpha}=[\mathcal{O}_{K}: \mathbb{Z}[\alpha]]$
of some 
algebraic integer $\alpha$ with $K=\mathbb{Q}(\alpha)$.
\begin{lemma}\label{technical}
Let $\zeta_m$ be a
primitive root of unity for $m \geq 1$. Let $p$ be a prime number and $\mu$ an integer, such that $p \mid \mu $. Let 
$ K=\mathbb{Q}[\zeta_m]$ 
and $\mathcal{O}_K$ the ring of integers.
Let
\begin{equation*}
\alpha := \alpha_{\beta} = \zeta_m + \mu \, \beta, \text{ where } \beta \in \mathcal{O}_K.
\end{equation*}
Let $\mathcal{O}_{K, \alpha}:= \mathbb{Z}[\alpha]$ be the order associated to $\alpha$.
Then $p$ is coprime to the index 
$$\kappa_{\alpha}= \left[ \mathcal{O}_K : \mathcal{O}_{K, \alpha}\right].$$
\end{lemma}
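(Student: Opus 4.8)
The plan is to reduce everything modulo $p$ and exploit that cyclotomic fields have a power basis, so that the lemma becomes a clean statement about reduction maps; this is exactly the hypothesis needed before one can invoke the Dedekind--Kummer Theorem \ref{D-K}. First I would recall from the previous subsection that $\mathcal{O}_K = \mathbb{Z}[\zeta_m]$, whence $\mathcal{O}_K/p\mathcal{O}_K = \mathbb{F}_p[\overline{\zeta_m}]$, where the bar denotes reduction modulo $p\mathcal{O}_K$. The key observation is that $\alpha$ and $\zeta_m$ have the same image in this quotient: since $p \mid \mu$ we may write $\mu = p\,c$ with $c \in \mathbb{Z}$, and because $\beta \in \mathcal{O}_K$ we obtain $\alpha - \zeta_m = \mu\,\beta = p\,(c\,\beta) \in p\mathcal{O}_K$. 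Hence $\overline{\alpha} = \overline{\zeta_m}$.

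Next I would translate this into a statement about $\mathbb{Z}[\alpha]$. The image of the subring $\mathcal{O}_{K,\alpha} = \mathbb{Z}[\alpha]$ under the reduction map $\mathcal{O}_K \to \mathcal{O}_K/p\mathcal{O}_K$ is the $\mathbb{F}_p$-subalgebra $\mathbb{F}_p[\overline{\alpha}]$, since powers of $\alpha$ reduce to powers of $\overline{\alpha}$ and integer coefficients reduce into $\mathbb{F}_p$. By the previous step this equals $\mathbb{F}_p[\overline{\zeta_m}] = \mathcal{O}_K/p\mathcal{O}_K$, so the reduction map is already surjective when restricted to $\mathbb{Z}[\alpha]$. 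Equivalently, $\mathbb{Z}[\alpha] + p\mathcal{O}_K = \mathcal{O}_K$.

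Finally I would conclude with an elementary module argument. Consider the quotient group $G := \mathcal{O}_K/\mathbb{Z}[\alpha]$. The relation $\mathbb{Z}[\alpha] + p\mathcal{O}_K = \mathcal{O}_K$ says precisely that multiplication by $p$ is surjective on $G$, i.e. $p\,G = G$. For a finitely generated abelian group this forces $G$ to be finite with $p \nmid |G|$, because by the structure theorem a nonzero free part or a nontrivial $p$-primary torsion part would each obstruct $p\,G = G$. In particular the finiteness of $G$ re-proves that $\alpha$ is a primitive element, that is $\mathbb{Q}(\alpha) = K$ and the index $\kappa_{\alpha}$ is well defined, and $p \nmid |G| = \kappa_{\alpha}$, as claimed.

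The argument is short once the power basis $\mathcal{O}_K = \mathbb{Z}[\zeta_m]$ is in hand; the only point demanding care is the last step, where one must note that $p\,G = G$ for a finitely generated abelian group $G$ simultaneously yields finiteness of the index and its coprimality to $p$. Thus no separate verification that $\alpha$ generates $K$ is needed, as it is a byproduct of the same computation.
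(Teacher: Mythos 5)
Your proof is correct and follows essentially the same route as the paper: both arguments rest on the congruence $\alpha \equiv \zeta_m \pmod{p\,\mathcal{O}_K}$ together with the power basis $\mathcal{O}_K = \mathbb{Z}[\zeta_m]$ to show that $\mathbb{Z}[\alpha]$ surjects onto $\mathcal{O}_K/p\,\mathcal{O}_K$, i.e.\ that $p\,G = G$ for $G = \mathcal{O}_K/\mathbb{Z}[\alpha]$, and then conclude $p \nmid |G|$; the paper merely phrases the first step via the right exact functor $\otimes_{\mathbb{Z}}\mathbb{Z}/p\mathbb{Z}$. Your version is in fact slightly more self-contained, since you derive the finiteness of $G$ (and hence $\mathbb{Q}(\alpha)=K$) from $p\,G=G$ and finite generation, whereas the paper takes the finiteness of $\mathcal{O}_K/\mathcal{O}_{K,\alpha}$ as given.
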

\begin{proof}
We consider the exact sequence
\begin{equation*}\label{short}
 0 \rightarrow \mathcal{O}_{K,\alpha} \hookrightarrow \mathcal{O}_K \twoheadrightarrow 
 \mathcal{O}_K \, / \, \mathcal{O}_{K,\alpha}
 \rightarrow 0.
\end{equation*}
We apply the right exact functor 
$\otimes_{\mathbb{Z}} \mathbb{Z}/p \,  \mathbb{Z}$.
Let $M$ be a $\mathbb{Z}$ modul, then $$M \otimes_{\mathbb{Z}} \mathbb{Z}/p \,  \mathbb{Z}
\simeq M  /  pM$$ (\cite{JS14}). 
Since $\zeta_m \equiv \alpha \pmod{p \, \mathcal{O}_K}$, we obtain
\begin{equation*}
\mathcal{O}_{K, \alpha}  \big/  p \, \mathcal{O}_{K, \alpha} \, \simeq \, 
\mathcal{O}_{K}  \big/ \, p  \mathcal{O}_{K}.
\end{equation*}
Therefore,
\begin{equation*}
p \, \mathcal{O}_{K}  \big/  \mathcal{O}_{K, \alpha} \simeq \,  \mathcal{O}_{K}  \big/  \mathcal{O}_{K, \alpha}.
\end{equation*}
Since $\mathcal{O}_{K}  \big/  \mathcal{O}_{K, \alpha}$ is a 
finite abelian group the claim of the lemma follows (\cite{Leu96}, Chapter $4$).
\end{proof}
\begin{enumerate}
\item
Let $\alpha= \zeta_m + 2 \, \beta$ with $\beta \in \mathcal{O}_K$. Then $K=\mathbb{Q}[\alpha]$.
We deduce that $$ 2 \nmid \kappa_{\alpha} = 
[ \mathcal{O}_K : \mathcal{O}_{K,\alpha}]$$
from Lemma \ref{technical}.
Therefore, we can apply the Dedekind--Kummer Theorem \ref{D-K}. We have that
$\func{Min}_{\alpha}(x) \pmod{2}$ decomposes into linear factors 
$\Longleftrightarrow$ $m =2^{\ell
}$, $\ell \geq 2$.
On the other hand we have from Lemma \ref{A:decomposition} and $A_{1}^{g}\left( x\right) =x$, $A_{2}^{g}\left( x\right) = x(x+g(2))$
that
\begin{equation}
A_n^g(x) \pmod{2}\text{ decomposes into linear factors.}
\label{stern}
\end{equation}
Suppose $P_n^g(\zeta_m)=0$ for $n \geq 1$ and $m \neq 2^{\ell}$, $
\ell \geq 2$. Then
$\func{Min}_{\alpha}(x) \pmod{2}$ has to divide $A_n^g(x) \pmod{2}$. 
But this contradicts (\ref{stern}). 
\item  Due to $g(3) \equiv 0,1 \pmod{3}$, we deduce from 
Lemma \ref{A:decomposition} and Lemma \ref{A3} that $A_n^g(x) \pmod{3}$ decomposes into linear factors.
With the same reasoning as in i) and the assumption on $m$  we obtain the claim.
\end{enumerate}

\section{Open challenges and further study}
In the following we consider the sequence of D'Arcais polynomials $\{P_n^{\sigma}(x)\}_n$. Some of the questions can be also be asked for a more general type of polynomials $P_n^g(x)$, where $g$ is a non-negative normalized integer-valued arithmetic function. For example, let $g(n)=n^d$, $(d \in \mathbb{N}_0)$ then $d=1$ is related to Laguerre polynomials \cite{HNT20}.
\subsection{Imaginary axis}
We believe that the D’Arcais polynomials are Hurwitz polynomials. 
The zero at $x=0$ is irrelevant. It would be of interest to show that 
$x=0$ is the only zero on the imaginary axis, in particular that 
\[
f_n(t) := P_n^{\sigma}(it)
\]
is non-vanishing for all $t \in \mathbb{Z}\setminus\{0\}$. 
Theorem~\ref{theorem:main} with $m=4$ and $p=3$ implies that 
\[
f(3k \pm 1) \neq 0 \quad \text{for all } k \in \mathbb{Z}.
\] 
Moreover, an analysis of $A_n^{\sigma}(x) \pmod{7}$ yields 
\[
P_n^{\sigma}(\pm 3i) \neq 0.
\]

\subsection{Results of \.{Z}mija}
In this paper we generalized the results of Theorem \ref{luca} and Theorem
\ref{zmija} with the assumption that $g(3)\not\equiv 2 \pmod{3}$.
It would be interesting to clarify how this is compatible or related with 
the assumptions of 
Theorem \ref{zmija}. Note that we do not assume that $P_n^g(x)$ is integer-valued.
\subsection{Vanishing properties on the unit circle}
Let $g$ be an integer-valued normalized arithmetic function 
and 
$g(3) \not\equiv 2 \pmod{3}$. Let 
$$\mathbb{U}:= \{ z \in \mathbb{C}\, : \, \vert z \vert =1\}.$$
It follows from Theorem \ref{theorem:main}
that $P_n^g(x)$ is non-vanishing
at all primitive roots of unity $\zeta_m$ for $m \geq 3$.
Since $A_n^g(x) \in \mathbb{Z}[x]$ and normalized, zeros have to
be algebraic integers. Let $g=\sigma $.
Then $\zeta_1$ is not a zero and
$\zeta_2$ is a zero if and only if $n$ is not of the form $k (3k-1)/2$ for $k \in \mathbb{Z}$ (Euler's famous pentagonal theorem).
We expect that if $P_n^{\sigma}(\alpha)=0$ with $\alpha \in \mathbb{C}$ and $\lvert \alpha \rvert = 1$, then necessarily $\alpha = -1$.

\subsection{Dedeking-Kummer approach}
Find suitable algebraic integers $\alpha$, as for example $\zeta_m$ $(m \geq 3)$ with $P_n^{\sigma}(\alpha) \neq 0$. And analyse the
prime ideal decomposition of $p \mathcal{O}_K$ for $p=2,3$ and $5$ and
$K=\mathbb{Q}[\alpha]$ and the irreducible factors of the minimal polnomial $\pmod{p}$.

Vary $\alpha$ to $\beta$ such that still the same number field is generated
and that the index $[\mathcal{O}_K: \mathbb{Z}[\beta]]$ can be controlled.
\subsection{Stretching the unit circle}
Let $r \in \mathbb{N}$ or more generally let $r$ be an algebraic integer. 
Let
$$\mathbb{U}_r:= \{ r \, z \in \mathbb{C}\, : \, \vert z \vert =1\}.$$
Determine the $\alpha \in \mathbb{U}_r$ with $P_n^{\sigma} (\alpha) \neq 0$ for all $n \in \mathbb{N}$ (or almost all $n$).

\begin{Acknowledgements}
The authors thank Christian Kaiser for helpful discussions on variations of the Dedekind–Kummer theorem, and Johann Stumpenhusen for valuable suggestions.
\end{Acknowledgements}

\end{document}